\newtheorem{theorem}{Theorem}
\newtheorem{lemma}[theorem]{Lemma}
\newtheorem{proposition}[theorem]{Proposition}
\newtheorem{corollary}[theorem]{Corollary}
\newtheorem{example}[theorem]{Example}
\newtheorem{question}[theorem]{Question}
\begin{document}

\title[Upper bounds for the tightness]{Upper bounds for the tightness of the $G_\delta$-topology}

\author[A. Bella]{Angelo Bella}
\address{Department of Mathematics and Computer Science \\
University of Catania \\
viale A. Doria 6 \\
95125 Catania, Italy}
\email{bella@dmi.unict.it}

\author[S. Spadaro]{Santi Spadaro}
\address{Department of Mathematics and Computer Science \\
University of Catania \\
viale A. Doria 6 \\
95125 Catania, Italy}
\email{santidspadaro@gmail.com}

\subjclass[2010]{ 54A25, 54D20, 54D55.}
\keywords{Free sequence, tightness, Lindel\"of, $G_\delta$-topology}

\begin{abstract}
We prove that if $X$ is a regular space with no uncountable free sequences, then the tightness of its $G_\delta$ topology is at most continuum and if $X$ is in addition Lindel\"of then its $G_\delta$ topology contains no free sequences of length larger then the continuum. We also show that the higher cardinal generalization of our theorem does not hold, by constructing a regular space with no free sequences of length larger than $\omega_1$, but whose $G_\delta$ topology can have arbitrarily large tightness.
\end{abstract}

\maketitle 
\bigskip  

\section{Introduction}

Given a space $X$, the \emph{$G_\delta$-modification of $X$} (or $G_\delta$-topology on $X$), $X_\delta$ is defined as the topology on $X$ which is generated by the $G_\delta$-subsets of $X$. The problem of bounding the cardinal invariants of $X_\delta$ in terms of those of $X$ is a well-studied one in set-theoretic topology. For example if $c$, $s$, $L$, $t$ denote respectively the cellularity, the spread, the Lindel\"of degree  and the tightness of $X$, then $c(X_\delta) \leq 2^{c(X)}$ for every compact space $X$ (see \cite{JArhan}), $s(X_\delta) \leq 2^{s(X)}$ for every Hausdorff space $X$ (see \cite{BS}) and $L(X_\delta) \leq 2^{L(X) \cdot t(X)}$ for every Hausdorff space $X$ (see \cite{Pytkeev}). This is nothing but a small sample of bounds for the $G_\delta$ topology that have been proved in the past; for more results and applications of the $G_\delta$ topology to homogeneous compacta we refer the reader to our paper \cite{BS} and its bibliography.

Note that we have not mentioned a bound for the tightness of the $G_\delta$ topology yet, and indeed finding such a bound seems to be particularly tricky. Answering a question posed in \cite{BS}, Dow, Juh\'asz, Soukup, Szentmikl\'ossy and Weiss \cite{DJSSW} proved that  the inequality $t(X_\delta)\le 2^{t(X)}$ holds within the realm of regular Lindel\"of spaces. The Lindel\"of property is essential in their argument, and in fact the authors were able to construct a consistent example of a regular countably tight space $X$ such that $t(X_\delta)$ can be as big as desired. They left open whether a countably tight space $X$ such that $t(X_\delta)>2^{\aleph_0}$ can be found in ZFC. This question was later solved in the positive by Usuba \cite{U}, who also found a bound on the tightness of the $G_\delta$-modification of every countably tight space, modulo the consistency of a certain very large cardinal. More precisely, Usuba proved that if $\kappa$ is an $\omega_1$-strongly compact cardinal then $t(X_\delta) \leq \kappa$, for every countably tight space $X$. Chen and Szeptycki \cite{CS} managed to prove a very tight consistent bound for the special class of Fr\'echet $\alpha_1$-spaces, namely $t(X_\delta) \leq \aleph_1$ if the Proper Forcing Axiom holds. 

Exploiting the notion of a free sequence, we will give another bound on the tightness of the $G_\delta$ topology.

A free sequences is a special kind of discrete set that was introduced by Arhangel'skii and is one of the essential tools in his celebrated solution of the Alexandroff-Urysohn problem on the cardinality of first-countable compacta. Recall that the set $\{x_\alpha: \alpha <\kappa \}\subseteq X$ is a free sequence provided that $\overline {\{x_\beta:\beta<\alpha \}}\cap \overline{\{x_\beta:\alpha \le \beta<\kappa \}}=\emptyset $ for each $\alpha <\kappa $. We define $F(X)$ to be the supremum of cardinalities of free-sequences in $X$. The cardinal functions $F(X)$ and $t(X)$ are intimately related. Indeed, $F(X)\le L(X)t(X)$ for every space $X$ and $t(X)=F(X)$, for every compact Hausdorff space $X$. However, the gap between $F(X)$ and $t(X)$ can be arbitrarily large even for a Lindel\"of space $X$, as observed by Okunev \cite{Ok}.

We will prove a result about the tightness of the $G_\delta$ modification which has the Dow, Juh\'asz, Soukup, Szentmikl\'ossy and Weiss bound as a consequence and also implies the following new bound: if $X$ is a regular space such that $F(X)=\omega$, then $t(X_\delta) \leq 2^{\aleph_0}$. The higher cardinal generalization of this is not true, as we will construct, for every cardinal $\kappa$, a regular space $X$ such that $F(X)=\omega_1 < \kappa=t(X_\delta)$. As a byproduct of our bound we will obtain that if $X$ is a Lindel\"of regular space such that $F(X)=\omega$ then $F(X_\delta) \leq 2^{\aleph_0}$.

Given a set $S$, we denote by $\mathcal{P}(S)$ the powerset of $S$ and by $[S]^{\leq \kappa}$ the set of all subsets of $S$ which have cardinality at most $\kappa$. For undefined notions see \cite{E}, but our notation regarding cardinal functions follows \cite{J}.

 \section{The tightness of the $G_\delta$-modification}

Let $X$ be a space, let $W$ be a subset of $X$ and let $\kappa$ be an infinite cardinal. We say that a collection $\mathcal{U}$ of subsets of $X$ is a $Cl_\kappa $-cover of $W$ provided that  for any  $C \in [W]^{\le \kappa }$ there is  $U_C \in \mathcal{U}$ such that $\overline{C} \subseteq U_C$.

We say that a space $X$ is $Cl_\kappa$-Lindel\"of  if whenever $W$ is a subset of $X$ and $\mathcal U$ is an open $Cl_\kappa $-cover of
$W$, then $W$ is covered by countably many elements of $\mathcal{U}$.

\begin{lemma}  Every Lindel\"of space $X$  is $Cl_{t(X)}$-Lindel\"of. 
\end{lemma}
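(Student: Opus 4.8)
The plan is to reduce the statement to ordinary Lindel\"ofness by first observing that an open $Cl_{t(X)}$-cover of $W$ is automatically an open cover of the \emph{closure} $\overline{W}$, and then exploiting that $\overline{W}$, being closed in the Lindel\"of space $X$, is itself Lindel\"of. Throughout I set $\kappa = t(X)$ and let $\mathcal{U}$ be an open $Cl_\kappa$-cover of a set $W \subseteq X$.

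The crucial step is the claim that $\mathcal{U}$ covers $\overline{W}$. To see this, fix $x \in \overline{W}$. Since $x$ lies in the closure of $W$ and $t(X) = \kappa$, the definition of tightness provides a set $C \in [W]^{\le \kappa}$ with $x \in \overline{C}$. Because $\mathcal{U}$ is a $Cl_\kappa$-cover of $W$, there is $U_C \in \mathcal{U}$ with $\overline{C} \subseteq U_C$, whence $x \in U_C$. As $x$ was arbitrary, $\mathcal{U}$ covers $\overline{W}$, and in particular it covers $W$ itself.

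With the claim in hand the conclusion is immediate. The family $\mathcal{U} \cup \{X \setminus \overline{W}\}$ consists of open sets and covers all of $X$: points of $\overline{W}$ are covered by the claim, while the remaining points lie in the open set $X \setminus \overline{W}$. Since $X$ is Lindel\"of, this cover admits a countable subcover; discarding the single set $X \setminus \overline{W}$ (which is disjoint from $\overline{W}$, hence from $W$) leaves countably many members of $\mathcal{U}$ still covering $W$, exactly as required. Equivalently, one may simply note that $\overline{W}$ is a closed subspace of the Lindel\"of space $X$, hence Lindel\"of, and apply Lindel\"ofness to the open cover $\mathcal{U}$ of $\overline{W}$.

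I expect essentially no serious obstacle here: the only point needing care is that $W$ as a subspace need not be Lindel\"of, so one cannot apply the hypothesis to $W$ directly. Passing to the closed set $\overline{W}$ repairs this, and the passage is justified precisely because the $Cl_\kappa$-covering assumption, together with tightness at most $\kappa$, ensures that no point of $\overline{W}$ escapes $\mathcal{U}$.
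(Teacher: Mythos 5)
Your proof is correct and is exactly the paper's argument: the paper's one-line proof says "it suffices to observe that every open $Cl_{t(X)}$-cover of $W$ is actually a cover of $\overline{W}$," and your write-up simply fills in the two implicit details (tightness puts each point of $\overline{W}$ in the closure of a $\le t(X)$-sized subset of $W$, and Lindel\"ofness of the closed set $\overline{W}$ yields the countable subfamily). Nothing further is needed.
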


\begin{proof} It suffices to observe that every open $Cl_{t(X)}$-cover of a set $W \subseteq X$ is actually a cover of $\overline{W}$. 
\end{proof}

\begin{lemma} 
Every space $X$ satisfying $F(X)=\omega$ is $Cl_\omega$-Lindel\"of. 
\end{lemma}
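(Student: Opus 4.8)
The plan is to argue by contraposition: assuming $X$ is not $Cl_\omega$-Lindel\"of, I will build a free sequence of length $\omega_1$, which contradicts $F(X)=\omega$. So suppose there is a set $W\subseteq X$ and an open $Cl_\omega$-cover $\mathcal U$ of $W$ admitting no countable subcover of $W$. The two hypotheses I have available are in perfect tension: the $Cl_\omega$-cover property supplies, for every countable subset of $W$, an element of $\mathcal U$ swallowing its closure, while the failure of a countable subcover guarantees that no countably many members of $\mathcal U$ exhaust $W$. A transfinite recursion alternating between these two features should manufacture the free sequence.

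Concretely, I would construct points $x_\alpha\in W$ and sets $V_\alpha\in\mathcal U$ for $\alpha<\omega_1$ by recursion. At stage $\alpha$, the already-chosen set $C_\alpha=\{x_\beta:\beta<\alpha\}$ is countable, so by the $Cl_\omega$-cover property there is $V_\alpha\in\mathcal U$ with $\overline{C_\alpha}\subseteq V_\alpha$; I fix such a $V_\alpha$ first. Then, since the collection $\{V_\gamma:\gamma\le\alpha\}$ is countable and $\mathcal U$ has no countable subcover of $W$, I may choose $x_\alpha\in W\setminus\bigcup_{\gamma\le\alpha}V_\gamma$. Note the order of the two choices is essential: $V_\alpha$ is determined by the earlier points, and $x_\alpha$ is then required to avoid every $V_\gamma$ with $\gamma\le\alpha$.

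To see that $\{x_\alpha:\alpha<\omega_1\}$ is a free sequence, fix $\alpha<\omega_1$. On one hand $\overline{\{x_\beta:\beta<\alpha\}}=\overline{C_\alpha}\subseteq V_\alpha$. On the other hand, for every $\beta\ge\alpha$ we have $x_\beta\notin V_\alpha$, because $x_\beta$ was chosen outside $\bigcup_{\gamma\le\beta}V_\gamma\supseteq V_\alpha$; as $V_\alpha$ is open, its complement is closed and hence $\overline{\{x_\beta:\beta\ge\alpha\}}\subseteq X\setminus V_\alpha$. Thus the two closures are separated by $V_\alpha$ and their intersection is empty, which is exactly the free-sequence condition. (The same observation shows the $x_\alpha$ are distinct, since $x_\alpha\notin V_\alpha$ while every earlier point lies in $V_\alpha$.)

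I do not expect a serious obstacle here; the content is essentially bookkeeping in the recursion. The one point that must be handled with care is precisely the ordering described above, ensuring that the separating open set $V_\alpha$ is chosen \emph{before} $x_\alpha$ yet still contains the closure of the whole initial segment $\{x_\beta:\beta<\alpha\}$, so that it simultaneously captures the left part and is avoided by the entire right part of the sequence. Producing an $\omega_1$-length free sequence then contradicts $F(X)=\omega$, completing the proof.
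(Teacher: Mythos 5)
Your proposal is correct and follows essentially the same argument as the paper: a transfinite recursion of length $\omega_1$ in which $V_\alpha$ (containing $\overline{\{x_\beta:\beta<\alpha\}}$ via the $Cl_\omega$-cover property) is chosen before $x_\alpha$ (chosen outside $\bigcup_{\gamma\le\alpha}V_\gamma$ via the failure of countable subcovers), yielding a free sequence of length $\omega_1$ that contradicts $F(X)=\omega$. The only difference is that you spell out the verification of freeness, which the paper leaves implicit.
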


\begin {proof} Let $W$be a subset of $X$ and $\mathcal{U}$ be an open $Cl_\omega$-cover of $W$. Assume by contradiction that no countable subfamily of $\mathcal{U}$ covers $W$. 
We will then construct a free sequence of cardinality $\omega_1$ inside $W$. 

Suppose that, for some $\beta < \omega_1$, we have chosen points $\{x_\tau: \tau < \beta \} \subset W$ and elements $U_\tau\in \mathcal{U}$ for every $\tau < \beta$ with the property that $\overline{\{x_\gamma:\gamma<\tau  \}} \subseteq U_{\tau}$. Choose $U_\beta \in \mathcal{U}$ in such a way that $\overline{\{x_\alpha: \alpha < \beta\}} \subseteq U_\beta$. By
our assumption, the family $\{U_\tau: \tau \leq \beta \}$ does not cover $W$, and therefore we can fix a point $x_\beta \in W \setminus \bigcup \{U_\tau: \tau \leq \beta\}$. 

Eventually, $\{x_\tau: \tau < \omega_1\}$ is a free sequence of cardinality $\omega_1$ in $X$, which is a contradiction.
\end{proof}
          
\begin{theorem} \label{mainthm}
Let $X$ be a regular space and let $\kappa$ be an infinite cardinal. If $X$ is $Cl_\kappa$-Lindel\"of,  then $t(X_\delta) \leq 2^\kappa$.
\end{theorem}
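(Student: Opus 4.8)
The plan is to argue by contradiction, extracting from a point of $\overline{A}^\delta$ with no small witness a configuration that the covering hypothesis forbids. First I would record the tools. By regularity a basic neighbourhood of $y$ in $X_\delta$ may be taken of the form $\bigcap_{n<\omega}V_n$ with each $V_n$ open, $y\in V_n$ and $\overline{V_{n+1}}\subseteq V_n$; hence $y\in\overline{A}^\delta$ iff every such countable intersection meets $A$. Two elementary facts follow: $X$-closed sets are $\delta$-closed, and if $U$ is an open neighbourhood of $y$ with $y\in\overline{A}^\delta$ then $y\in\overline{A\cap U}^\delta$, since $U\cap\bigcap_nV_n$ is again a basic $\delta$-neighbourhood of $y$. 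I would also isolate the combinatorial consequence of the hypothesis: $Cl_\kappa$-Lindel\"of implies $F(X)\le\kappa$. Indeed, given a free sequence $\{x_\alpha:\alpha<\kappa^+\}$, the sets $U_\alpha=X\setminus\overline{\{x_\beta:\beta\ge\alpha\}}$ are open and form a $Cl_\kappa$-cover of $W=\{x_\alpha:\alpha<\kappa^+\}$ (any $C\in[W]^{\le\kappa}$ is bounded by some $\alpha$, so $\overline{C}\subseteq\overline{\{x_\beta:\beta<\alpha\}}\subseteq U_\alpha$ by freeness), while no countable subfamily covers $W$ because the relevant indices have a common bound below $\kappa^+$; this contradicts $Cl_\kappa$-Lindel\"ofness.

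The decisive reduction is a dichotomy. For any $S$ with $x\in\overline{S}^\delta$, ask whether some $C\in[S]^{\le\kappa}$ has $x\in\overline{C}$ (ordinary closure). If \emph{not}, then for every $C\in[S]^{\le\kappa}$ regularity separates the point $x$ from the closed set $\overline{C}$, giving an open $U_C\supseteq\overline{C}$ with $x\notin\overline{U_C}$; thus $\{U:U\text{ open},\ x\notin\overline{U}\}$ is an open $Cl_\kappa$-cover of $S$, so $Cl_\kappa$-Lindel\"ofness yields countably many members $U_k$ covering $S$, and $\bigcap_k(X\setminus\overline{U_k})$ is a $\delta$-neighbourhood of $x$ missing $S$ — contradicting $x\in\overline{S}^\delta$. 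Hence there is always $C\in[S]^{\le\kappa}$ with $x\in\overline{C}$; if moreover $x\in\overline{C}^\delta$ we finish with $B=C$, so the only surviving case is $x\in\overline{C}\setminus\overline{C}^\delta$, witnessed by a $\delta$-neighbourhood $G=\bigcap_nV_n$ of $x$ disjoint from $C$.

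The main line then iterates this. Applying the dichotomy at $S=A$ and zooming into the separating $G_\delta$, I would build a decreasing sequence of small sets $C_\alpha$ and basic $\delta$-neighbourhoods $G_\alpha$ of $x$ with $C_\alpha\subseteq A_\alpha:=A\cap\bigcap_{\beta<\alpha}G_\beta$, $x\in\overline{C_\alpha}$, and $G_\alpha\cap C_\alpha=\emptyset$, keeping the invariant $x\in\overline{A_\alpha}^\delta$ (preserved at successors and at countable limits, where $\bigcap_{\beta<\alpha}G_\beta$ is still a $G_\delta$). To manage the reflection and the bookkeeping I would run this along a continuous chain of elementary submodels $M_\alpha\prec H(\theta)$, each of size $2^\kappa$ and closed under $\kappa$-sequences (possible as $(2^\kappa)^\kappa=2^\kappa$), with $A,x\in M_0$ and $\langle M_\beta:\beta\le\alpha\rangle\in M_{\alpha+1}$, choosing $C_\alpha,G_\alpha\in M_{\alpha+1}$ and the points of $A_\alpha$ inside $M_{\alpha+1}$. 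The aim is that the process either exhibits, within the accumulated models of total size $2^\kappa$, a set $B\subseteq A$ of size at most $2^\kappa$ with $x\in\overline{B}^\delta$, or runs for $\kappa^+$ steps and produces a free sequence of length $\kappa^+$ in $X$, contradicting $F(X)\le\kappa$.

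I expect the main obstacle to be exactly the surviving case $x\in\overline{C}\setminus\overline{C}^\delta$ and its effect at limit stages of uncountable cofinality. There an intersection $\bigcap_{\beta<\alpha}G_\beta$ of $\aleph_1$ or more $G_\delta$-sets need no longer be a $\delta$-neighbourhood, so the invariant $x\in\overline{A_\alpha}^\delta$ can fail and the recursion can stall; symmetrically, when one tries to read off a free sequence, the separating objects $G_\alpha=\bigcap_nV^\alpha_n$ are $G_\delta$'s whose $\omega$ levels exclude different earlier points at different levels, so no single $X$-open set separates an initial segment from its tail. Converting these level-by-level $\delta$-separations into genuine $X$-open separations (so that the constructed sequence is free in $X$, not merely in $X_\delta$), and simultaneously repairing the invariant across uncountable-cofinality limits, is the heart of the matter; this is precisely where regularity together with the $Cl_\kappa$-Lindel\"of covering property applied to the countably many $V^\alpha_n$ must be used, and where the bound $2^\kappa$ rather than $\kappa$ is forced, entering through the size of the reflecting models.
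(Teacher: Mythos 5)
Your two preliminary observations are correct and even insightful: $Cl_\kappa$-Lindel\"ofness does imply $F(X)\le\kappa$ (your cover $U_\alpha=X\setminus\overline{\{x_\beta:\beta\ge\alpha\}}$ works), and your dichotomy is sound --- if $x\in\overline{S}^\delta$ then some $C\in[S]^{\le\kappa}$ has $x\in\overline{C}$, since otherwise the open sets $U$ with $x\notin\overline{U}$ form an open $Cl_\kappa$-cover of $S$ and a countable subfamily $\{U_k\}$ gives the $\delta$-neighbourhood $\bigcap_k(X\setminus\overline{U_k})$ of $x$ missing $S$. But the main construction is not a proof, and the gap is exactly the one you flag yourself and then defer. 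First, your recursion preserves the invariant $x\in\overline{A_\alpha}^\delta$ only at successors and countable limits, so it stalls at $\omega_1$; in particular it can never run for $\kappa^+$ steps when $\kappa>\omega$, and the elementary submodels are invoked only as an ``aim,'' with no argument that they repair limits of uncountable cofinality. Second, and fatally, even where the recursion does run, the separating objects $G_\alpha$ are $G_\delta$-sets rather than open sets, so the sequence you extract (with tails inside $\bigcap_{\gamma<\alpha}G_\gamma$ and $C_\alpha\cap G_\alpha=\emptyset$) is free only in $X_\delta$, not in $X$. That contradicts nothing: the hypothesis bounds free sequences of $X$, and control of free sequences of $X_\delta$ is precisely what the paper later \emph{derives} (Corollary \ref{corfree}) from this theorem, not an available input. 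A proof whose central step --- converting level-by-level $\delta$-separations into genuine open separations --- is explicitly left as ``the heart of the matter'' is a plan, not a proof.

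The paper's argument shows how to avoid this branch-following entirely, and it is worth seeing why it never meets your obstacle. By regularity, for every $C\in[X]^{\le\kappa}$ with $p\notin\overline{C}$ fix disjoint open sets $U_C\supseteq\overline{C}$ and $V_C\ni p$, once and for all. Then build an increasing chain $\{W_\alpha:\alpha<\kappa^+\}$ of subsets of $A$ of size $\le 2^\kappa$, where $W_{\gamma+1}$ adds, for \emph{every} countable family $\mathcal{C}$ of such sets $C$ drawn from $W_\gamma$, one point of $A\cap\bigcap\{V_C:C\in\mathcal{C}\}$ (nonempty because this intersection is a $G_\delta$-neighbourhood of $p$ and $p\in\overline{A}^\delta$). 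Put $W=\bigcup_\alpha W_\alpha$. Given a $G_\delta$-neighbourhood $\bigcap_n O_n$ of $p$, every $C\in[W\setminus O_n]^{\le\kappa}$ has $\overline{C}\subseteq X\setminus O_n$, so $\{U_C\}$ is an open $Cl_\kappa$-cover of $W\setminus O_n$ and $Cl_\kappa$-Lindel\"ofness yields a countable $\mathcal{C}_n$ with $W\setminus O_n\subseteq\bigcup\{U_C:C\in\mathcal{C}_n\}$; the union of all the $C\in\bigcup_n\mathcal{C}_n$ has size $\le\kappa$, hence sits inside some $W_\delta$, and the point added for this countable family at stage $\delta+1$ lies in every $V_C$, hence outside every $U_C$, hence in $W\cap\bigcap_n O_n$. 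The structural difference from your plan is decisive: instead of following a single descending branch of separating $G_\delta$'s (which forces you to intersect uncountably many of them at limit stages), the closing-off quantifies over all countable families of separated $\kappa$-sized sets at once, so limit stages are mere unions and everything terminates by regularity of $\kappa^+$. If you want to salvage your write-up, replace the transfinite shrinking of $A$ by this saturation argument; your dichotomy then becomes unnecessary, since the covering property is applied directly to the sets $W\setminus O_n$.
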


\begin{proof}
Let $A$ be any subset of $X$ and fix a point $p$ in the $G_\delta$-closure of $A$. 

Let $\mathcal{N}_\kappa(X)=\{C \in [X]^{\leq \kappa}: p \notin \overline{C}\}$. By the regularity of $X$, for every $C \in \mathcal{N}_\kappa(X)$, we can find disjoint open sets $U_C$ and $V_C$ such that $\overline{C} \subset U_C$ and $p \in V_C$.

Let $\phi$ be a choice function on $\mathcal{P}(X)$. We will build by induction an increasing family $\{W_\alpha: \alpha < \kappa^+\} \subset [A]^{2^{\kappa}}$.

Let $W_0$ be any subset of $A$ of cardinality $\leq 2^\kappa$ and assume we have already defined $\{W_\beta: \beta < \alpha \}$. If $\alpha$ is a limit ordinal then put $W_\alpha=\bigcup \{W_\beta: \beta < \alpha \}$. If $\alpha=\gamma+1$ then let:

$$W_\alpha=W_\gamma \cup \{\phi(A \cap \bigcap \{V_C: C \in \mathcal{C}\}): \mathcal{C} \in [\mathcal{N}_\kappa(W_\gamma)]^{\leq \omega} \}$$

Note that $|W_\alpha| \leq 2^\kappa$.

Finally, let $W=\bigcup \{W_\alpha: \alpha < \kappa^+\}$. Since $|W| \leq 2^\kappa$, it suffices to show that $p$ is in the $G_\delta$-closure of $W$. 

Indeed, let $\{O_ n:  n<\omega  \}$ be a family of open neighbourhoods of $p$. 

\medskip

\noindent {\bf Claim.} There is a countable family $\mathcal{C}_n \subset \mathcal{N}_\kappa(W \setminus O_n )$ such that $W \setminus O_n \subset \bigcup \{U_C: C \in
\mathcal{C}_n \}$.

\begin{proof}[Proof of Claim]
Since $O_n $ is an open neighbourhood of $p$, we have that $\mathcal{N}_\kappa(W \setminus O_n )=[W \setminus O_n ]^{\leq \kappa}$ and therefore $U_C$ is defined for every $C \in [W \setminus O_n ]^{\leq \kappa}$ and $\overline{C} \subset U_C$. In particular, $\mathcal{U}=\{U_C: C \in \mathcal{N}_\kappa(W \setminus O_n) \}$ is a $Cl_\kappa $-open
cover of $W \setminus O_n$. Now, the statement of the claim follows from the fact that $X$ is a $Cl_\kappa $-Lindel\"of space.
\renewcommand{\qedsymbol}{$\triangle$}
\end{proof}

For every $n<\omega $, fix a family $\mathcal{C}_ n$ satisfying the Claim and let $\mathcal{S}=\bigcup \{\mathcal{C}_ n:n<\omega \}$ and $S=\bigcup \mathcal{S}$. 

Since the set $S$ has cardinality at most $\kappa$, there is an ordinal $\delta < \kappa^+$ such that $S \subset W_\delta$. It follows then that the point $q=\phi(\bigcap_{C \in \mathcal{S}} V_C \cap A)$ belongs to $W_{\delta+1} \subset W$.

Note that, for every $n<\omega $, $q \in \bigcap \{V_C: C \in \mathcal{C}_ n\} \cap W \subset W \setminus \bigcup \{U_C: C \in
\mathcal{C}_n \} \subset O_n \cap W$. Therefore $q \in \bigcap \{O_n :n<\omega  \} \cap W$ and we are done.
\end{proof}

\begin{corollary} (Dow, Juh\'asz, Soukup, Szentmikl\'ossy and Weiss \cite{DJSSW})  
If $X$ is a Lindel\"of regular space, then $t(X_\delta)\le 2^{t(X)}$. 
\end{corollary}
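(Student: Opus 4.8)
The plan is to obtain this as an immediate specialization of Theorem \ref{mainthm}. The key observation is that tightness is an infinite cardinal invariant, so $\kappa := t(X)$ is a legitimate value to feed into the theorem, whose only standing hypothesis on $\kappa$ is that it be infinite. Thus the entire argument reduces to matching up the hypotheses of the corollary with those of the theorem for this particular choice of $\kappa$.

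The one link I need between the hypotheses of the corollary (Lindel\"of, regular) and those of the theorem ($Cl_\kappa$-Lindel\"of, regular) is supplied by the first lemma of this section, which states that every Lindel\"of space is $Cl_{t(X)}$-Lindel\"of. I would therefore proceed in two moves. First, I apply that lemma to conclude that the Lindel\"of space $X$ is $Cl_{t(X)}$-Lindel\"of. Second, since $X$ is also regular, I apply Theorem \ref{mainthm} with $\kappa = t(X)$ to conclude $t(X_\delta) \leq 2^{t(X)}$, which is exactly the desired bound.

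There is essentially no obstacle here: the substance of the result lives in Theorem \ref{mainthm} and in the first lemma, while the corollary is just the bookkeeping that aligns their parameters. The only points worth a moment's care are verifying that $t(X)$ is indeed infinite, so that the theorem is applicable (this holds by the usual convention that cardinal functions take values at least $\omega$), and recalling why the lemma itself is true, namely that an open $Cl_{t(X)}$-cover of a set $W$ already covers $\overline{W}$, since every point of $\overline{W}$ lies in the closure of some subset of $W$ of cardinality at most $t(X)$. Granting these, the two-step deduction delivers the theorem of Dow, Juh\'asz, Soukup, Szentmikl\'ossy and Weiss as a clean corollary of our more general $Cl_\kappa$-Lindel\"of framework.
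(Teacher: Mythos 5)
Your proposal is correct and is exactly the paper's (implicit) argument: the corollary is deduced by combining the first lemma of the section (every Lindel\"of space is $Cl_{t(X)}$-Lindel\"of, since an open $Cl_{t(X)}$-cover of $W$ covers $\overline{W}$) with Theorem \ref{mainthm} applied to $\kappa = t(X)$. Nothing is missing; your added remarks on the infiniteness of $t(X)$ and the proof of the lemma are accurate.
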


\begin{corollary} \label{cor} 
If $X$ is a regular space and $F(X)=\omega$, then $t(X_\delta)\le 2^\omega$. 
\end{corollary}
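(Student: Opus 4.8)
The plan is to derive this as an immediate specialization of the machinery already assembled, rather than to argue from scratch. The two inputs I would combine are the second Lemma above, which guarantees that any space with $F(X)=\omega$ is $Cl_\omega$-Lindel\"of, and Theorem \ref{mainthm}, which bounds $t(X_\delta)$ for regular $Cl_\kappa$-Lindel\"of spaces.

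Concretely, I would first invoke the Lemma: since $F(X)=\omega$, the space $X$ is $Cl_\omega$-Lindel\"of. This is the only place where the hypothesis $F(X)=\omega$ enters, and the Lemma extracts precisely the covering property that the theorem requires. Then I would apply Theorem \ref{mainthm} with the choice $\kappa=\omega$. Since $X$ is regular and $Cl_\omega$-Lindel\"of, the theorem yields $t(X_\delta)\le 2^\omega$, which is exactly the desired conclusion. The only bookkeeping point is that $\omega$ is an admissible value of $\kappa$, and this is immediate because $\kappa$ in the theorem ranges over all infinite cardinals.

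The main obstacle is not in the corollary itself but has already been discharged inside Theorem \ref{mainthm}: namely, the transfinite construction of the increasing chain $\{W_\alpha:\alpha<\kappa^+\}$ together with the Claim that $Cl_\kappa$-Lindel\"ofness permits covering each $W\setminus O_n$ by countably many of the sets $U_C$, which in turn lets one locate a witness point $q$ simultaneously inside all $O_n$. Once those are in hand, the corollary needs no further combinatorics. I therefore expect the actual proof to be a single sentence chaining the Lemma and the theorem, with all the genuine work outsourced to the two prior results.
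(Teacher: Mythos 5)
Your proposal is correct and is exactly how the paper intends the corollary to be read: the paper gives no separate proof because it follows immediately from the second Lemma (that $F(X)=\omega$ implies $Cl_\omega$-Lindel\"of) combined with Theorem \ref{mainthm} applied with $\kappa=\omega$. Nothing is missing and no further argument is required.
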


It's natural to ask whether the higher cardinal version of Corollary $\ref{cor}$ holds true. The following theorem shows that this is not the case.

Let $\theta$ be a regular uncountable cardinal. Recall that an elementary submodel $M$ of $H(\theta)$ is said to be \emph{$\omega$-covering} if for every countable subset $A$ of $M$ there is a countable set $B \in M$ such that $A \subset B$. The union of any elementary chain of elementary submodels of length $\omega_1$ is an $\omega$-covering elementary submodel, so $\omega$-covering submodels of cardinality $\omega_1$ exist in ZFC (see \cite{D}).

\begin{theorem} \label{example}
For every uncountable cardinal $\kappa$, there is a space $Y$ such that $F(Y)=\omega_1 < \kappa=t(Y_\delta)$.
\end{theorem}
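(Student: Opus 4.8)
The plan is to build $Y$ as a subspace of the Cantor cube $2^\kappa$, using an $\omega$-covering elementary submodel $M\prec H(\theta)$ with $|M|=\omega_1$ and $\omega_1\cup\{\kappa\}\subseteq M$ (whose existence is recorded above) to govern the combinatorics. Since tightness never exceeds cardinality, I would keep $|Y|=\kappa$, so that $t(Y_\delta)\le\kappa$ is free and the real work is to force $t(Y_\delta)\ge\kappa$ \emph{while} keeping $F(Y)\le\omega_1$. It is worth recording first why the obvious candidate fails. If one takes $Y=\kappa\cup\{p\}$ with the points of $\kappa$ isolated and the neighbourhoods of $p$ the complements of the sets of size $<\kappa$, then the family of "removable" sets is exactly $[\kappa]^{<\kappa}$; well-ordering any one member of size $\omega_2$ gives a sequence all of whose initial segments are removable, hence a free sequence, so $F(Y)\ge\omega_2$. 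The moral is that a single non-isolated point can never bound the free sequences: one needs an \emph{abundance} of accumulation points to cut the free-sequence length down, and this is precisely what a subspace of $2^\kappa$ supplies.

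Concretely, I would fix a family $\mathcal S=\{S_\alpha:\alpha<\kappa\}\subseteq[\kappa]^{\omega_1}$ and set $Y=\{\mathbf 0\}\cup\{\chi_{S_\alpha}:\alpha<\kappa\}$ with the subspace topology, where $\chi_S$ is the characteristic function of $S$ and $p=\mathbf 0$. A basic $G_\delta$-neighbourhood of $\mathbf 0$ has the form $\{x:x\restriction C=0\}$ for a countable $C\subseteq\kappa$, and a countable family of such sets again intersects in one of the same form; hence for $B\subseteq Y$ one has $\mathbf 0\notin\overline{B}^{\delta}$ exactly when some countable $C$ meets the support of every member of $B$. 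The two requirements therefore become two purely combinatorial demands on $\mathcal S$: (i) for every countable $C\subseteq\kappa$ there is $\alpha$ with $S_\alpha\cap C=\emptyset$, so that $\mathbf 0$ lies in the $G_\delta$-closure of $A=\{\chi_{S_\alpha}:\alpha<\kappa\}$; and (ii) for every $I\in[\kappa]^{<\kappa}$ there is a countable $C$ meeting $S_\alpha$ for all $\alpha\in I$, so that no subset of $A$ of size $<\kappa$ has $\mathbf 0$ in its $G_\delta$-closure. Together (i) and (ii) give $t(Y_\delta)\ge\kappa$.

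The heart of the argument, and the step I expect to be the main obstacle, is constructing such an $\mathcal S$ that \emph{also} leaves $F(Y)=\omega_1$. There is a genuine tension between (i) and (ii): property (i) forces the supports to be spread out enough to escape any countable set, whereas (ii) forces every $<\kappa$-sized subfamily to be concentrated enough to be pinned by a countable set, the transition occurring exactly at $\kappa$; and the free-sequence bound must be threaded between them, since bounded support size alone does \emph{not} bound $F$. This is where the submodel enters. I would take an elementary chain $\langle M_\xi:\xi<\omega_1\rangle$ with $M=\bigcup_{\xi<\omega_1}M_\xi$ and define the supports $S_\alpha$ by a recursion relative to this filtration, at each stage diagonalising against the countable subsets of $\kappa$ already captured. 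The $\omega$-covering property is exactly what absorbs any countable set of coordinates — i.e.\ any potential $G_\delta$-witness — into a single countable object of $M$, making the diagonalisation close up at limit stages; and $|M|=\omega_1$, together with the length $\omega_1$ of the chain, is what pins the scale of the "small" pieces at $\omega_1$ and hence controls the free sequences.

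Finally I would carry out the three verifications. Regularity (indeed zero-dimensionality) is inherited from $2^\kappa$. The inequality $t(Y_\delta)\ge\kappa$ follows from (i) and (ii) via the description of the $G_\delta$-neighbourhoods of $\mathbf 0$, while $t(Y_\delta)\le\kappa=|Y|$ is automatic, giving equality. For $F(Y)=\omega_1$ I would establish $F(Y)\ge\omega_1$ by exhibiting an explicit free sequence of length $\omega_1$ along a suitable increasing chain of supports, and $F(Y)\le\omega_1$ by showing that a free sequence of length $\omega_2$ would, through the $\omega$-covering of $M$, yield a countable set simultaneously pinning an uncountable initial segment and its tail, contradicting freeness at that stage. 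The delicate point throughout is to keep the two combinatorial demands on $\mathcal S$ mutually consistent with the $F$-bound, and I expect the bookkeeping of $M$ and its chain — rather than any single closure computation — to be where essentially all of the effort lies.
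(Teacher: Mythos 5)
Your reduction is correct as far as it goes: the basic $G_\delta$-neighbourhoods of $\mathbf{0}$ in $2^\kappa$ are exactly the sets $\{x: x(\gamma)=0 \text{ for all } \gamma \in C\}$ with $C$ countable, and your conditions (i) and (ii) on $\mathcal{S}$ are precisely equivalent to the statement that $A=\{\chi_{S_\alpha}:\alpha<\kappa\}$ witnesses $t(\mathbf{0},Y_\delta)\ge\kappa$. The gap is that the proof stops exactly where the theorem begins: a family $\mathcal{S}$ satisfying (i), (ii) \emph{and} $F(Y)=\omega_1$ is never produced, and no step of the proposed recursion along the chain $\langle M_\xi:\xi<\omega_1\rangle$ is specified or verified. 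As you yourself say, that is where essentially all the effort lies; so what you have is a correct reformulation of the problem, not a proof.

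Moreover, the side constraints you impose make the plan unsalvageable as stated. Because you insist on $|\mathcal{S}|=\kappa$ (to get $t(Y_\delta)\le|Y|=\kappa$ for free), conditions (i) and (ii) become outright contradictory whenever $\mathrm{cf}(\kappa)=\omega$, for instance $\kappa=\aleph_\omega$, which the theorem must cover: write $\kappa=\bigcup_{n<\omega}I_n$ with $|I_n|<\kappa$, apply (ii) to each $I_n$ to get a countable $C_n$ meeting every $S_\alpha$ with $\alpha\in I_n$, and observe that $\bigcup_{n<\omega}C_n$ is a countable set meeting every member of $\mathcal{S}$, contradicting (i). On the other hand, for regular $\kappa$ conditions (i) and (ii) by themselves are cheap: the final segments $S_\alpha=(\alpha,\kappa)$ satisfy both, with (ii) witnessed by the single point $\sup I+1$; but then $\langle\chi_{S_\alpha}:\alpha<\kappa\rangle$ is a free sequence of length $\kappa$ in $Y$, separated at stage $\alpha$ by the clopen set $\{x:x(\alpha)=1\}$. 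So the entire weight of the theorem rests on the $F$-bound, which is exactly the part your proposal leaves blank.

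For contrast, the paper avoids any recursion by taking the family as large as possible rather than of size $\kappa$: it sets $Y=\Sigma(2^\kappa)\cup\{p\}$ with $p$ the constant-one point, which in your notation means $\mathcal{S}$ is the family of \emph{all} co-countable subsets of $\kappa$. Then (i) holds because $\chi_C\in\Sigma(2^\kappa)$ for every countable $C$, and (ii) holds with a single coordinate: if $B\subseteq\Sigma(2^\kappa)$ has $|B|<\kappa$, the union of the supports of its members has size $<\kappa$, and any $\alpha$ outside it yields an ordinary open neighbourhood $\{x:x(\alpha)=1\}$ of $p$ missing $B$; no countable-cofinality obstruction arises because $|\Sigma(2^\kappa)|=\kappa^\omega>\kappa$. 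The price is that the upper bound on $t(Y_\delta)$ is no longer automatic, but the lower bound $t(Y_\delta)\ge\kappa$ is the substance of the theorem. The bound $F(Y)\le\omega_1$ then comes from general theory rather than from a bespoke construction: $\Sigma(2^\kappa)$ is Fr\'echet--Urysohn, an $\omega$-covering elementary submodel argument shows $L(\Sigma(2^\kappa))=\aleph_1$, hence $F\le L\cdot t\le\omega_1$, and adjoining the single point $p$ cannot create longer free sequences. That Lindel\"of-degree computation is where the elementary submodel genuinely belongs in this proof; in your plan it is asked to perform a task (building $\mathcal{S}$) that nothing in the proposal shows it can perform.
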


\begin{proof}
Let $X=\Sigma(2^{\kappa})=\{x \in 2^{\kappa}: |x^{-1}(1)| \leq \aleph_0 \}$ and let $p \in 2^{\kappa}$ be the point defined by $p(\alpha)=1$, for every $\alpha < \kappa$. We will prove that $Y=X \cup \{p\}$ with the topology inherited from $2^{\kappa}$ is the required example.

The following Claim was proved by the second author in \cite{S} for the case $\kappa=\omega_2$, but the argument works for any uncountable cardinal $\kappa$ without any modifications. We include it for the reader's convenience.

 \medskip

\noindent {\bf Claim}. $L(X) = \aleph_1$.

\begin{proof}[Proof of Claim]
Let $\mathcal{U}$ be an open cover of $X$. Without loss of generality we can assume that for every $U \in \mathcal{U}$, there is a finite partial function $\sigma: \kappa \to 2$ such that $U=\{x \in 2^{\kappa}: \sigma \subset x \}$. The domain of $\sigma$ will then be called the \emph{support of $U$} and we will write $supp(U)=dom(\sigma)$.

Let $\theta$ be a large enough regular cardinal and $M$ be an $\omega$-covering elementary submodel of $H(\theta)$ such that $X, \mathcal{U}, \kappa \in M$ and $|M|=\aleph_1$.

We claim that $\mathcal{U} \cap M$ covers $X$. Indeed, let $x \in X$ be any point and let $A \in M$ be a countable set such that $x^{-1}(1) \cap M \subset A$. 

Let $Z=\{y \in X: (\forall \alpha \in \kappa \setminus A)(y(\alpha)=0) \}$. Then $Z \in M$ and $Z$ is a compact subspace of $X$. So there is a finite subfamily $\mathcal{V} \in M$ of $\mathcal{U}$ such that $Z \subset \bigcup \mathcal{V}$. Since $\mathcal{V}$ is finite, we have $\mathcal{V} \subset M$. It then follows that $\mathcal{U} \cap M$ covers $Z$. 

Let $a$ be the point such that $a(\alpha)=x(\alpha)$ for all $\alpha \in M \cap \kappa$ and $a(\alpha)=0$ for all $\alpha \in \kappa \setminus M$. The fact that $x^{-1}(1) \cap M \subset A$ implies that $a \in Z$ and hence there is $U \in \mathcal{U} \cap M$ such that $a \in U$. Note that $supp(U)$ is a finite element of $M$ and hence $supp(U) \subset M$. But since $x$ and $a$ coincide on $M$ we then have that $x \in U$ as well, as we wanted.

This proves $L(X) \leq \aleph_1$, but we can't have $L(X)=\aleph_0$ because $X$ is countably compact non-compact. Hence $L(X)=\aleph_1$.

\renewcommand{\qedsymbol}{$\triangle$}
\end{proof}

It is well known that $X$ is Fr\'echet-Urysohn and hence $X$ has countable tightness. Since $F(X) \leq L(X) \cdot t(X)$ we have $F(X) \leq \omega_1$, but then also $F(Y) \leq \omega_1$. It's easy to see that $t(p, Y_\delta)=\kappa$.
\end{proof}

In \cite{CPR} Carlson, Porter and Ridderbos proved the following improvement of the Pytkeev inequality $L(X_\delta) \leq 2^{L(X) \cdot t(X)}$ mentioned in the introduction. 

\begin{theorem} \label{CPRTheorem} \cite{CPR} (Theorem 2.7) 
If $X$ is a Hausdorff space, then $L(X_\delta) \leq 2^{L(X)F(X)}$. 
\end{theorem}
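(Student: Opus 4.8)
The plan is to prove the statement in its covering formulation: writing $\kappa = L(X)F(X)$, I would show that \emph{every cover of $X$ by $G_\delta$-sets admits a subcover of cardinality at most $2^\kappa$}. Since the $G_\delta$-sets form a base for $X_\delta$, this is exactly the assertion $L(X_\delta)\le 2^\kappa$. So fix such a cover $\mathcal U$ and, for each $G\in\mathcal U$, fix open sets $\{O^G_n:n<\omega\}$ of $X$ with $G=\bigcap_n O^G_n$. The first ingredient I would isolate is the $\kappa$-level analogue of the Lemma asserting that $F(X)=\omega$ implies $Cl_\omega$-Lindel\"ofness, namely: if $F(X)\le\kappa$, then every open $Cl_\kappa$-cover of an arbitrary $W\subseteq X$ has a subcover of cardinality at most $\kappa$. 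This is proved exactly as that Lemma, simply running the recursion up to $\kappa^+$ rather than $\omega_1$: if no $\le\kappa$-sized subfamily covered $W$, one selects points $x_\beta$ and members $U_\beta$ with $\overline{\{x_\gamma:\gamma<\beta\}}\subseteq U_\beta$ and $x_\beta\notin\bigcup\{U_\tau:\tau\le\beta\}$, producing a free sequence of length $\kappa^+>F(X)$, a contradiction.

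The second ingredient is a counting and reflection device of size $2^\kappa$. I would take an elementary submodel $M\prec H(\theta)$ with $X$, $\mathcal U$ and the assignment $G\mapsto\langle O^G_n:n<\omega\rangle$ all in $M$, with $|M|=2^\kappa$ and $M$ closed under $\le\kappa$-sequences (legitimate because $(2^\kappa)^\kappa=2^\kappa$); equivalently one may run an Arhangel'skii-style closing-off of length $\kappa^+$ producing a subfamily $\mathcal V\subseteq\mathcal U$ of size at most $2^\kappa$, the value $2^\kappa$ arising because at each stage one processes the $\le\kappa$-sized subsets of an already-built set of size at most $2^\kappa$, and $|[2^\kappa]^{\le\kappa}|=2^\kappa$. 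The goal then reduces to showing that $\mathcal V=\mathcal U\cap M$ already covers $X$, which at once delivers the required subcover.

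For this covering step I would argue by contradiction: suppose $p\in X\setminus\bigcup(\mathcal U\cap M)$. Using $L(X)\le\kappa$ inside $M$, the trace of $\mathcal U$ on $M$ reflects a $\le\kappa$-sized family of open sets controlling the behaviour of the cover near the escaping point; feeding the closed complements $X\setminus O^G_n$ into the $Cl_\kappa$-reflection of the first ingredient forces this local family to collapse to $\le\kappa$ members which, together with Hausdorff separation of $p$ from the finite approximations, must already place $p$ inside some $G\in\mathcal U\cap M$ — or else the selection goes on for $\kappa^+$ steps and yields a free sequence of length $\kappa^+$. Either alternative contradicts the hypotheses. The main obstacle, and the step I expect to absorb most of the work, is precisely this fusion: the Lindel\"of degree is a feature of the original topology while the cover lives in the $G_\delta$-topology, so one must interleave the countable-intersection structure of the members of $\mathcal U$ with the $\le\kappa$-reflection supplied by $F(X)\le\kappa$, doing so with Hausdorff separation alone rather than regularity (as in Theorem~\ref{mainthm}), and verify throughout that the bookkeeping never exceeds $2^\kappa$.
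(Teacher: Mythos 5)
This statement is not proved in the paper at all: it is quoted verbatim from Carlson--Porter--Ridderbos \cite{CPR} (their Theorem 2.7), so there is no in-paper argument to compare yours against, and your proposal has to stand on its own. It does not. Your first ingredient is fine: the $\kappa$-level analogue of Lemma 2 (if $F(X)\le\kappa$, every open $Cl_\kappa$-cover of any $W\subseteq X$ has a subcover of size $\le\kappa$) is correct, and the recursion of length $\kappa^+$ you describe does produce a free sequence exactly as in the paper's Lemma 2. The second ingredient (a $\le\kappa$-closed elementary submodel $M$ of size $2^\kappa$, legitimate since $(2^\kappa)^\kappa=2^\kappa$) is also standard. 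But the entire content of the theorem is the third step --- showing that $\mathcal U\cap M$ covers $X$ --- and there you have written a description of a hoped-for argument, not an argument: ``the trace of $\mathcal U$ on $M$ reflects a $\le\kappa$-sized family \ldots controlling the behaviour of the cover,'' ``forces this local family to collapse,'' ``must already place $p$ inside some $G\in\mathcal U\cap M$ --- or else the selection goes on for $\kappa^+$ steps.'' No recursion is defined, no free sequence is exhibited, and freeness is never verified. You yourself flag this step as ``the main obstacle'' expected ``to absorb most of the work''; that is an admission that the proof is missing, not a proof.

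The gap is not merely one of exposition; there is a concrete mathematical obstruction sitting exactly where you wave your hands. Your plan is to feed the closed sets $X\setminus O^G_n$ (or small closed subsets of them) into the $Cl_\kappa$-machinery, but that machinery needs \emph{open} sets containing closures of small sets, disjoint from (or at least avoiding) the escaping point $p$. In Theorem~\ref{mainthm} this is precisely where regularity is used: for each small $C$ with $p\notin\overline C$, regularity produces disjoint open $U_C\supseteq\overline C$ and $V_C\ni p$, and the sets $U_C$ form the open $Cl_\kappa$-cover to which the Lindel\"of-type reflection is applied. Under the hypothesis of the present theorem $X$ is only Hausdorff, so a point cannot in general be separated by open sets from a closed set, and your phrase ``Hausdorff separation of $p$ from the finite approximations'' has no meaning --- Hausdorffness separates pairs of points only. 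So the one tool your outline relies on to convert the $G_\delta$-cover data into an open $Cl_\kappa$-cover is unavailable, and nothing in the proposal replaces it. A correct proof (the one in \cite{CPR}) has to organize the interaction between $L(X)$, $F(X)$ and the countable intersections quite differently; as written, your proposal reduces the theorem to exactly the part of it that is hard, and leaves that part unproved.
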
 

Putting together Corollary $\ref{cor}$ and the above theorem we obtain: 

\begin{corollary} \label{corfree}
Let $X$ be a regular Lindel\"of space such that $F(X)=\omega$. Then $F(X_\delta) \leq 2^{\aleph_0}$.
\end{corollary}

We don't know whether the Lindel\"of property can be removed from the above corollary.

\begin{question} \label{FreeQuestion}
Let $X$ be a regular space satisfying $F(X)=\omega$. Is it true that $F(X_\delta) \leq 2^\omega$?
\end{question}

It's reasonable to conjecture that the higher cardinal version of Corollary $\ref{corfree}$ holds, at least for Lindel\"of spaces.

\begin{question}
Let $X$ be a regular (Lindel\"of) space. Is it true that $F(X_\delta) \leq 2^{F(X)}$?
\end{question}

Note that neither the consistent example from \cite{DJSSW} of a regular countably tight space $X$ such that $t(X_\delta)$ can be arbitrarily large nor the example from Theorem $\ref{example}$ work for the above question since $F(X)=|X|$ for the former and $F(X_\delta) \leq 2^{\aleph_0}$ for the latter.

We finish with two easy bounds for the tightness of the $G_\delta$ topology, by making using of the weight and the spread.

\begin{proposition} \label{prop} 
Let $X$ be a regular space. Then:
\begin{enumerate}
\item \label{density} $t(X_\delta) \leq 2^{d(X)}$.
\item \label{spread} $t(X_\delta) \leq 2^{s(X)}$.
\end{enumerate}
\end{proposition}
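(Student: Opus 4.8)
The plan is to handle the two items by parallel but distinct techniques: item (\ref{density}) through a reflection argument with an elementary submodel $M$ closed under $\lambda$-sequences, where $\lambda=d(X)$, and item (\ref{spread}) by upgrading the covering lemma behind Theorem \ref{mainthm} from ``countable subcover'' to ``subcover of size $\le\kappa$'', which is exactly what the spread supplies through a free sequence.

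For (\ref{density}) I would fix a dense set $D$ with $|D|=\lambda=d(X)$, take $A\subseteq X$ and $p$ in the $G_\delta$-closure of $A$, and choose $M\prec H(\theta)$ with $X,A,D,p\in M$, $|M|=2^\lambda$ and $[M]^{\le\lambda}\subseteq M$; such $M$ is built as the union of an elementary chain of length $\lambda^+$, and its closure under $\lambda$-sequences forces $D\subseteq M$. I claim $B:=A\cap M$, which has size at most $2^\lambda$, already has $p$ in its $G_\delta$-closure. If not, fix open sets $O_n\ni p$ with $\bigcap_n O_n\cap A\cap M=\emptyset$, and use regularity to pick open $V_n$ with $p\in V_n\subseteq\overline{V_n}\subseteq O_n$. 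The heart of the argument is to replace each $O_n$ by a member of $M$: set $E_n:=V_n\cap D$, so that density gives $\overline{E_n}=\overline{V_n}$, while $E_n\in[M]^{\le\lambda}\subseteq M$ yields $W_n:=\mathrm{int}(\overline{E_n})\in M$ with $p\in V_n\subseteq W_n\subseteq\overline{V_n}\subseteq O_n$. Then $\{W_n:n<\omega\}\in M$, so $G:=\bigcap_n W_n\in M$ is a $G_\delta$-set containing $p$; since $p$ lies in the $G_\delta$-closure of $A$ we get $G\cap A\ne\emptyset$, and elementarity produces a witness $z\in G\cap A\cap M$. But $G\subseteq\bigcap_n O_n$, contradicting $\bigcap_n O_n\cap A\cap M=\emptyset$.

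For (\ref{spread}) set $\kappa=s(X)$; since free sequences are discrete, $F(X)\le s(X)=\kappa$. First I would prove the exact analogue of the lemma stating that $F(X)=\omega$ implies $Cl_\omega$-Lindel\"ofness, namely: if $s(X)\le\kappa$ then every open $Cl_\kappa$-cover $\mathcal U$ of any $W\subseteq X$ admits a subcover of cardinality $\le\kappa$. The recursion copies the $\omega$-case verbatim: assuming no subcover of size $\le\kappa$, one chooses $U_\beta\in\mathcal U$ with $\overline{\{x_\tau:\tau<\beta\}}\subseteq U_\beta$ (legitimate because $\{x_\tau:\tau<\beta\}$ has size $\le\kappa$ for every $\beta<\kappa^+$) and then $x_\beta\in W\setminus\bigcup_{\tau\le\beta}U_\tau$, producing a free sequence of length $\kappa^+$ and contradicting $F(X)\le\kappa$. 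I would then invoke Theorem \ref{mainthm}, or rather its proof, which goes through unchanged once ``countably many'' is read as ``at most $\kappa$ many'': the set $S=\bigcup\mathcal S$ assembled there still has cardinality at most $\kappa\cdot\kappa=\kappa$ and hence lands in some $W_\delta$ with $\delta<\kappa^+$. This gives $t(X_\delta)\le 2^\kappa=2^{s(X)}$.

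The main obstacle in (\ref{density}) is precisely the reflection step: from the arbitrary witnessing neighborhoods $O_n$, which need not belong to $M$, I must manufacture genuine elements of $M$ that still trap $p$ inside $\bigcap_n O_n$; this is where regularity (to shrink $O_n$ to $\overline{V_n}\subseteq O_n$) and density (to recover $\overline{V_n}$ as the closure of the $M$-set $V_n\cap D$) are both indispensable, together with the $\lambda$-closure of $M$ guaranteeing $V_n\cap D\in M$. In (\ref{spread}) the only delicate point is the bookkeeping observation that the proof of Theorem \ref{mainthm} never truly needed countability of the subcover beyond the cardinal estimate $|S|\le\kappa$, so the spread-driven subcover of size $\le\kappa$ is enough.
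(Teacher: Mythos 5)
Your argument for item (\ref{density}) is correct, but it takes a genuinely different route from the paper, whose proof is a one-line cardinal function computation: for regular $X$ one has $w(X) \leq 2^{d(X)}$, countable intersections of members of a base of $X$ form a base of $X_\delta$, and hence $t(X_\delta) \leq w(X_\delta) \leq w(X)^\omega \leq 2^{d(X)\cdot\omega} = 2^{d(X)}$. Your elementary submodel reflection (shrinking each $O_n$ by regularity, recovering $\overline{V_n}$ as the closure of the $M$-set $E_n = V_n \cap D$, and using the $\lambda$-closure of $M$ to get $\bigcap_n W_n \in M$) is sound: all the needed facts ($D \subseteq M$, $E_n \in M$, the sequence $\{W_n : n<\omega\} \in M$, and the elementarity step producing $z \in G \cap A \cap M$) do follow from $[M]^{\leq\lambda} \subseteq M$. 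It is heavier machinery for the same bound, though it exhibits the witnessing subset $A \cap M$ directly instead of passing through weight.

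Item (\ref{spread}) has a genuine gap. Your covering lemma itself is fine: $s(X) \leq \kappa$ does imply that every open $Cl_\kappa$-cover of any $W \subseteq X$ has a subcover of size $\leq \kappa$, since the recursion produces a free (hence discrete) sequence of length $\kappa^+$. What fails is the claim that the proof of Theorem \ref{mainthm} ``never truly needed countability of the subcover beyond the cardinal estimate $|S| \leq \kappa$.'' It did. In that proof each $\mathcal{C}_n$ is countable, so $\mathcal{S} = \bigcup_n \mathcal{C}_n$ is countable and $\bigcap_{C \in \mathcal{S}} V_C$ is a $G_\delta$-set containing $p$; this is the only place the hypothesis that $p$ lies in the $G_\delta$-closure of $A$ is used, and it is exactly what guarantees that $A \cap \bigcap_{C \in \mathcal{S}} V_C$ is nonempty so that the point $q$ exists. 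With your subcovers of size $\kappa$, the family $\mathcal{S}$ has size $\kappa$ and $\bigcap_{C \in \mathcal{S}} V_C$ is merely a $G_\kappa$-set; membership of $p$ in the $G_\delta$-closure of $A$ says nothing about such intersections, so the set you feed to $\phi$ may be empty. The same defect infects the inductive construction, where $[\mathcal{N}_\kappa(W_\gamma)]^{\leq\omega}$ would have to become $[\mathcal{N}_\kappa(W_\gamma)]^{\leq\kappa}$ and again $\phi(A \cap \bigcap\{V_C : C \in \mathcal{C}\})$ may be applied to the empty set. This is not a bookkeeping issue: countability of the subcover is precisely what ties the argument to the $G_\delta$-topology, which is why the paper proves (\ref{spread}) by an entirely different route, namely $nw(X) \leq 2^{s(X)}$ for regular $X$ together with $t(X_\delta) \leq nw(X_\delta) \leq nw(X)^\omega \leq 2^{s(X)}$, using that countable intersections of members of a network for $X$ form a network for $X_\delta$.
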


\begin{proof}
To prove ($\ref{density}$) recall that $w(X) \leq 2^{d(X)}$ for every regular space $X$ (see \cite{J}). Now $t(X_\delta) \leq w(X_\delta) \leq w(X)^\omega \leq 2^{d(X) \cdot \omega}=2^{d(X)}$.

To prove ($\ref{spread}$) recall that $nw(X) \leq 2^{s(X)}$ for every regular space $X$ (see \cite{J}) and proceed as before.
\end{proof}

Proposition $\ref{prop}$, ($\ref{density}$) is not true for Hausdorff spaces, as the following example shows.

\begin{example}
There is a separable Hausdorff space $X$ such that $t(X_\delta) > 2^{\aleph_0}$.
\end{example}

\begin{proof}
Let $Y$ be the Kat\v{e}tov extension of the integer. That is, if $\mathcal{U}$ is the set of all non-priincipal ultrafilters on $\omega$ then $Y=\omega \cup \mathcal{U}$, every point of $\omega$ is isolated and a basic neighbourhood of $p \in \mathcal{U}$ is a set of the form $\{p\} \cup A \setminus F$, where $A \in p$ and $F$ is finite.

Let $X=Y \cup \{\infty\}$, where $\infty \notin Y$ and declare $V \subset X$ to be a neighbourhood of $\infty$ if and only if $|X \setminus V| \leq 2^{\aleph_0}$. It is easy to see that $X$ is a separable Hausdorff space and $t(X_\delta) > 2^{\aleph_0}$.
\end{proof}

\section{Acknowledgements}

The authors are grateful to INdAM-GNSAGA for partial financial support and to Lajos Soukup for pointing out an error in a previous version of the paper.

\end{document}